\newtheorem{theorem}{Theorem}[section]
\newtheorem{lemma}{Lemma}[section]
\newtheorem{corollary}{Corollary}[section]
\newtheorem{definition}{Definition}[section]
\newenvironment{proof}{\smallskip\noindent{\it Proof}}{$\Box$}
\numberwithin{equation}{section}
\newcommand{\bdelta}{{\boldsymbol \delta}}
\def\by{{\boldsymbol y}}
\def\bS{{\boldsymbol S}}
\def\bY{{\boldsymbol Y}}
\def\bz{{\boldsymbol z}}
\def\be{{\boldsymbol e}}
\def\bv{{\boldsymbol v}}
\def\bx{{\boldsymbol x}}
\def\bX{{\boldsymbol X}}
\def\cM{{\mathcal M}}
\def\cL{{\mathcal L}}
\def\cF{{\mathcal F}}
\def\cJ{{\mathcal J}}
\def\cG{{\mathcal G}}
\def\bbP{{\mathbb P}}
\begin{document}

\title{Prediction in Riemannian metrics \\derived from divergence functions} 
\author{Henryk Gzyl,\\
\noindent 
Centro de Finanzas IESA, Caracas, Venezuela.\\
 henryk.gzyl@iesa.edu.ve}

\date{}
 \maketitle

\setlength{\textwidth}{4in}
\vskip 1 truecm
\baselineskip=1.5 \baselineskip \setlength{\textwidth}{6in}
\begin{abstract}
Divergence functions are interesting discrepancy measures. Even though they are not true distances, we can use them to measure how separated two points are. Curiously enough, when they are applied to random variables, they lead to a notion of best predictor that coincides with usual best predictor in Euclidean distance. From a divergence function, we can derive a Riemannian metric, which leads to a true distance between random variables, and in which best predictors do not coincide with their Euclidean counterparts. It is the purpose of this note to point out that there are many interesting ways of measuring distance between random variables, and to study the notion of best predictors that they lead to.
\end{abstract}

\noindent {\bf Keywords}: Generalized best predictors, Bregman divergence, Riemannian distance, Tits-Bruhat spaces.\\
\noindent{MSC 2000 Subject Classification} 60G25, 60G99, 93E24, 62A99.

\begin{spacing}{0.01}
   \tableofcontents
\end{spacing}

\section{Introduction and Preliminaries}
In \cite{Br}, Bregman introduced an iterative procedure to find points in an intersection of convex sets. At each step, the next point in the sequence is obtained by minimizing an objective function, that can be described as the vertical distance of the graph of the function to the tangent  plane through the previous point.
If $\cM$ is a convex set in some $\mathbb{R}^K,$ and $\Phi:\cM\to\mathbb{R}$ is a strictly convex, continuously differentiable function, the {\it divergence function} that it defines is specified by
\begin{equation}\label{breg}
\bdelta_\Phi(\bx,\by)^2 = \Phi(\bx)-\phi(\by)-\langle(\bx-\by),\nabla\Phi(\by)\rangle.
\end{equation}
In Bregman's work, $\Phi(\bx)$ was taken to be the Euclidean square norm $\|\bx\|^2.$ The concept was eventually extended, even to the infinite dimensional case, and now plays an important role in many applications. For example, in clustering. classification analysis  and machine learning as in Banerjee et al. \cite{BGW}, Boisonnat el al. \cite{BNN}, Banerjee et al. \cite{BDGMM}. Fisher \cite{F}. It plays a role in optimization theory as in Baushke and Borwein \cite{BB}, Baushke and Lewis \cite{BL}, Baushke and Combettes \cite{BC},  Censor and Reich \cite{CR}, Baushke et al. \cite{BBC} and Censor and Zaknoon \cite{CZ}, or to solve operator equations as in Butnariu and Resmerita \cite{BR}, in approximation theory in Banach spaces as in Baushke and Combettes \cite{BC} or Li et al. \cite{LSY}. In applications of geometry to statistics and information theory as in Amari and Nagaoka \cite{AN},  Csisz\'ar \cite{Cs}, Amari and Cichoski \cite{AC}, Calin and Urdiste \cite{CU} or Nielsen \cite{N}. These are just a small sample of the many references to applications of Bregman functions, and the list cascades rapidly.

Is is a well known, and easy to verify fact, that 
$$\bdelta_\Phi(\bx,\by)^2 \geq 0,\;\;\mbox{and}\;\; \bdelta_\phi(\bx,\by)^2=0 \Leftrightarrow \bx=\by.$$
Thus our choice of notation is consistent. But as $\bdelta$ is not symmetric, nor does it satisfy the triangular inequality, it can not be a distance on $\cM.$
Let now $(\Omega,\cF,\bbP)$ be a probability space such that $\cF$ is complete (contains all sets of zero $\bbP$ measure). By $\cL_p, p=1,2$ we shall denote the usual classes of $\bbP$ integrable or square integrable functions, identified up to sets of measure zero. The notion of divergence can be extended to random variables as follows
\begin{definition}\label{divrv}
Let $\bX,\bY$ be $\cM$-valued random variables such that $\Phi(\bX),$ $\Phi(\bY)$ and $\nabla\Phi(\bY)$ are in $\cL_2.$ The divergence between $\bX$ and $\bY$ is defined by
$$\Delta_\Phi(\bX,\bY)^2 = E[\bdelta_\Phi(\bX,\bY)^2] = \int_\Omega\bdelta_\Phi(\bX(\omega),\bY(\omega))^2d\bbP(\omega).$$
\end{definition}
Clearly, $\Delta_\Phi(\bX,\bY)$ is neither symmetric nor satisfies the triangle inequality. But as above, we also have
$$\Delta_\Phi(\bX,\bY)^2 \geq 0\;\;\mbox{and}\;\;\Delta_\Phi(\bX,\bY)^2=0 \Leftrightarrow \bX=\bY\;\;{a.s}\;\bbP$$
we can think of it as a pseudo distance, cost or penalty function on $\cL_p.$

The motivation for this work comes from two directions. On the one hand, there is the fact that for Bregman divergences there is a notion of best predictor, and this best predictor happens to be the usual conditional expectation. To put it in symbols
\begin{theorem}\label{bpdiv}
Let $\bX\in\cL_2$ and let $\cG\subset\cF.$ Then the solution to the problem
$$\inf\{\Delta_\Phi(\bX,\bY)^2\,|\,\bY\in\cL_2(\cG)\}$$
is given by $E[\bX | \cG].$
\end{theorem}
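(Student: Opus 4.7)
The plan is to establish a Pythagorean-type decomposition of $\Delta_\Phi(\bX,\bY)^2$ that isolates $\bY^*:=E[\bX\mid\cG]$ as the minimizer. Concretely, I will show that for every admissible $\bY\in\cL_2(\cG)$,
\begin{equation*}
\Delta_\Phi(\bX,\bY)^2 \;=\; \Delta_\Phi(\bX,\bY^*)^2 \;+\; E[\bdelta_\Phi(\bY^*,\bY)^2],
\end{equation*}
from which the result follows immediately since the second term is nonnegative, and vanishes exactly when $\bY=\bY^*$ almost surely.

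To carry this out, I would first expand $\bdelta_\Phi(\bX,\bY)^2=\Phi(\bX)-\Phi(\bY)-\langle \bX-\bY,\nabla\Phi(\bY)\rangle$ inside the expectation. Since $\bY$ is $\cG$-measurable, so is $\nabla\Phi(\bY)$, and hence the tower property gives
\begin{equation*}
E\bigl[\langle \bX-\bY,\nabla\Phi(\bY)\rangle\bigr] \;=\; E\bigl[\langle \bY^*-\bY,\nabla\Phi(\bY)\rangle\bigr].
\end{equation*}
Adding and subtracting $\Phi(\bY^*)$ then yields
\begin{equation*}
\Delta_\Phi(\bX,\bY)^2 \;=\; E[\Phi(\bX)]-E[\Phi(\bY^*)] + E\bigl[\Phi(\bY^*)-\Phi(\bY)-\langle \bY^*-\bY,\nabla\Phi(\bY)\rangle\bigr],
\end{equation*}
where the last bracket is precisely $\bdelta_\Phi(\bY^*,\bY)^2$. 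Specializing the same computation to $\bY=\bY^*$ (again $\cG$-measurable, with $E[\bX-\bY^*\mid\cG]=0$) shows that $\Delta_\Phi(\bX,\bY^*)^2=E[\Phi(\bX)]-E[\Phi(\bY^*)]$, giving the decomposition displayed above.

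For this argument to be rigorous, I need all the expectations to be finite, which is where the integrability hypotheses of Definition \ref{divrv} enter: $\Phi(\bX),\Phi(\bY),\nabla\Phi(\bY)\in\cL_2$, together with $\bX,\bY\in\cL_2$, justify the use of the tower property on $\langle \bX,\nabla\Phi(\bY)\rangle$ via Cauchy--Schwarz, and ensure that $E[\Phi(\bY^*)]$ is well-defined (note that $\bY^*\in\cL_2(\cG)$ by conditional Jensen, but strict convexity of $\Phi$ does not automatically bound $E[\Phi(\bY^*)]$ by $E[\Phi(\bX)]$ unless one invokes conditional Jensen again, which is fine).

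The only real obstacle is the bookkeeping around integrability of $\Phi(\bY^*)$ and of the cross term $\langle \bY^*-\bY,\nabla\Phi(\bY)\rangle$; conceptually, everything reduces to the observation that the ``linear in $\bX$'' part of the Bregman divergence is handled exactly like the inner product in the classical $\cL_2$ projection theorem, so that the conditional expectation satisfies the analogous orthogonality relation $E[\langle \bX-\bY^*,\nabla\Phi(\bY^*)\rangle]=0$. Once the decomposition is in hand, the conclusion is immediate, and uniqueness (up to $\bbP$-null sets) follows from the strict convexity of $\Phi$ through the strict positivity of $\bdelta_\Phi$ off the diagonal.
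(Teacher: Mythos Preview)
Your argument is correct: the Pythagorean identity
\[
\Delta_\Phi(\bX,\bY)^2=\Delta_\Phi(\bX,\bY^*)^2+E[\bdelta_\Phi(\bY^*,\bY)^2],\qquad \bY^*=E[\bX\mid\cG],
\]
follows exactly as you describe, via the tower property applied to the linear-in-$\bX$ term, and it immediately yields both optimality and uniqueness of $\bY^*$. The integrability bookkeeping you flag is handled by conditional Jensen ($\Phi(\bY^*)\le E[\Phi(\bX)\mid\cG]$ gives $E[\Phi(\bY^*)]\le E[\Phi(\bX)]<\infty$) together with Cauchy--Schwarz for the cross term, so there is no gap.

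As for comparison with the paper: the paper does not actually supply a proof of this theorem; it simply refers the reader to Banerjee et al.\ \cite{BGW} and Fisher \cite{F}. Your Pythagorean decomposition is precisely the standard argument used in those references, so you have reproduced the intended proof rather than found an alternative.
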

For the proof the reader can consult Banerjee et al., \cite{BGW} or  Fisher's \cite{F}. The other thread comes from Gzyl's \cite{GH}, where a geometry on the convex cone of strictly positive is considered. That geometry happens to be derivable from a divergence function, and it leads to a host of curious variations on the theme of best predictor, estimation, laws of large numbers and central limit theorems. The geometry considered there is that induced by the logarithmic distance, which makes $(0,\infty)^d$ a Tits-Bruhat space, which happens to be a special commutative version of the theory explained in Lang \cite{L}, Lawson and Lin \cite{LL}, Mohaker \cite{Moh} and Schwartzman \cite{Sch}.

We should mention that the use of differential geometric methods in \cite{AN}, or \cite{CU} and the many references cited therein, is different from the one described below. They consider geometric structure either on the class of probabilities on a finite set, or in the space of parameters characterizing a (usually exponential) family of distributions.  Here we analyze how the geometry on the set in which the random variables take value, determines the nature of the standard estimation and prediction process.

From now on we shall suppose that $\cM = \cJ^K,$ where $\cJ$ is a bounded or unbounded interval in $\mathbb{R}.$ We shall denote by $\phi:\cJ\to\mathbb{R}$ a strictly convex, three times continuously differentiable function, and define 
\begin{equation}\label{convf}
\Phi(\bX) = \sum_{i=1}^K \phi(x_i)
\end{equation}

\subsection{Some standard examples}
In the next table we list five standard examples. The list could be quite longer, but the examples chosen because the in some of the cases the distance between random variables associated to the divergence bounds their divergence from above, whereas in the other, it is bounded by the divergence from above. The examples are displayed in Table \ref{tab1}.
\begin{table}[h!]
\centering
\begin{tabular}{|c|c|}\hline
Domain & $\phi$ \\\hline 
$\mathbb{R}$ & $x^2$\\\hline
$\mathbb{R}$ & $e^x$ \\\hline
$\mathbb{R}$ & $e^{-x}$ \\\hline
$(0,\infty)$ & $x\ln x$ \\\hline
$(0,\infty)$ & $-\ln x$\\ \hline
\end{tabular}
\caption{Standard convex functions used to generate Bregman divergences}
\label{tab1}
\end{table}

\subsection{Organization of the paper}
We have established enough notations to describe the contents of the paper. In Section 2 we start from the divergence function on $\cM$ and derive a metric tensor $g_{i,j}$ from it. We then solve the geodesic equations to compute the geodesic distance $d_\phi(\bx,\by)$ between any two points $\bx,\by\in\cM,$ and we compare it with the divergence $\bdelta_\phi(\bx,\by)$ between the two points. We shall see that there are cases in which one of them dominates the other for any pair of points. 

The Riemannian distance between points in $\cM$ induces a distance between random variables taking values in there. In Section 3 we come to the main theme of this work, that is, to the computations of best predictors when the distance between random variables is measured in the induced Riemannian distance. We shall call such best predictors the $d-$mean and the $d$-conditional expectation and denote them by $E_d[X]$ and, respectively, $E_d[\bX|\cG].$ In order to compare these to the best predictor in divergence, we use the prediction error as a comparison criterion. It is at this point at which the comparison results established in Section 2 come in.

In Section 4 we take up the issue of sample estimation and its properties. We shall see that the standard results hold for the $d$-conditional expectation as well. That is, we shall see that the estimator of the $d$-mean and that of the $d$-variance, are unbiased and converge to their true values as the size of the sample becomes infinitely large. In Section 5 we shall consider the arithmetic properties of the $d$-conditional expectation when there is a commutative group structure on $\cM.$ In  Section 6 we collect a few final comments, and in Appendix 7, we present one more derivation of the geodesic equations.

\section{Riemannian metric induced by $\phi$}
The direct connection between $\Phi$-divergences stems from the fact that a strictly convex, at least twice differentiable function has a positive definite Hessian matrix. Even more,  metric derived from a ``separable'' $\Phi$  is diagonal, that is
\begin{equation}\label{metric}
g_{i,j} = \frac{\partial^2 \Phi(\bx)}{\partial x_i\partial x_j} = \phi''(x_i)\delta_{i,j}
\end{equation}
\noindent Here we use $\delta_{i,j}$ for the standard Kronecker delta and we shall  not distinguish between covariant and contravariant coordinates. This may make the description of standard symbols in differential geometry a bit more awkward.

All these examples have an interesting feature in common. The convex function defining the Bregman divergence is three times continuously differentiable, and defines a Riemannian metric in its domain by $g_{i,j}(\bx)=\phi''(x_i)\delta_{i,j}.$ The equations for the geodesics in this metric are separated. It is actually easy to see that for each $1\leq i \leq d,$ the equation for defining the geodesic which at time $t=0$ starts from $x_i$ and end at $y_i$ at time $t=1,$ is the solution to
\begin{equation}\label{geoeq}
\phi''(x_i(t))\ddot{x}_i(t) + \frac{1}{2}\phi'''(x_i(t))\dot{x}_i^2(t) = 0\;\;x_i(0)=x_i,\;x_i(1)=y_i.
\end{equation}
Despite the fact that it is easy to integrate this equation rapidly, we show how to integrate this equation in a short appendix at the end. Now denote by $h(x)$ as a primitive of $(\phi''(x))^{1/2},$ that is
\begin{equation}\label{chanvar}
h(x) =\int^x\big(\phi''(t)\big)^{1/2}dt.
\end{equation}
therefore, it is strictly positive by assumption, it is invertible because it is strictly increasing. If we put $H=h^{-1},$ for the compositional inverse of $h,$  we can write the solution to (\ref{geoeq}) as
\begin{equation}\label{solgeo}
x_i(t) = H\Big(h(x_i) + k_it\Big)\;\;\;0\leq t \leq 1,\;\;i=1,...,K. 
\end{equation}
The $k_i$ are integration constants, which using the condition that $x_i(0)=x_i, x_i(1)=y_i$ turn out to be $k_i=h(y_i)- h(x_i).$ Notice now that the distance between $\bx$ and $\by$ is given by
\begin{equation}\label{geodis}
d_\phi(\bx,\by) = \int_0^1\Big(\sum_{i=1}^K\phi''(x_i(t))\dot{x}_i^2(t)\Big)^{1/2}dt.
\end{equation}
It takes a simple computation to verify that
\begin{equation}\label{geodes2}
d_\phi(\bx,\by) = \Big(\sum_{i=1}^K k_i^2\Big)^{1/2} = \Big(\sum_{i=1}^K h(y_i) - h(x_i))^2\Big)^{1/2}
\end{equation}
For not to introduce more notation, we shall use the symbol $h$ to denote the map $h:\cM \to \mathbb{R}^K,$ defined by $h(\bx)_i=h(x_i).$ Notice that $h$ is isometry between $\cM$ and its image in $\mathbb{R}^K,$ when the distance in the former is $d_phi$ and in the Later is the Euclidean distance. Therefore geometric properties in $\mathbb{R}^K$ have a counterpart in $cM.$

Observe as well that the special form of (\ref{solgeo}) and (\ref{geodes2}) allows us to represent the middle point between $\bx$ and $\by$ easily. As a mater of fact,we have
\begin{lemma}\label{midpt}
With the notations introduced above,  observe that if we put $z_i=\zeta(\bx,\by)_i = H\Big(\frac{1}{2}\big(h(x_i)+h(y_i)\big)\Big),$ then
$$d_\phi(\bx,\bz) = d_\phi(\by,\bz) = \frac{1}{2}d_\phi(\bx,\by) = \frac{1}{2}\Big(\sum_{i=1}^K(h(y_i) + h(x_i))^2\Big)^{1/2}.$$
\end{lemma}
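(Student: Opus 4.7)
The plan is to apply the explicit geodesic distance formula (\ref{geodes2}) directly, exploiting the fact that the map $h:\cM\to\mathbb{R}^K$ is, as the author already noted, an isometry onto its image when $\mathbb{R}^K$ carries the Euclidean metric. Under this isometry, the statement reduces to a triviality about Euclidean midpoints, and the formula for $\bz$ in the lemma is precisely the $h$-preimage of the Euclidean midpoint of $h(\bx)$ and $h(\by)$.

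First I would record that, since $H=h^{-1}$, the defining formula for $\bz$ gives $h(z_i)=\tfrac{1}{2}\bigl(h(x_i)+h(y_i)\bigr)$ for each $i$. From this, two elementary identities fall out immediately:
\begin{equation*}
h(z_i)-h(x_i) \;=\; \tfrac{1}{2}\bigl(h(y_i)-h(x_i)\bigr), \qquad h(y_i)-h(z_i) \;=\; \tfrac{1}{2}\bigl(h(y_i)-h(x_i)\bigr).
\end{equation*}

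Second, I would plug these into formula (\ref{geodes2}). Applied to the pairs $(\bx,\bz)$ and $(\by,\bz)$, the integration constants become $k_i^{(\bx,\bz)} = h(z_i)-h(x_i)$ and $k_i^{(\by,\bz)} = h(y_i)-h(z_i)$, each equal in absolute value to $\tfrac{1}{2}(h(y_i)-h(x_i))$. Squaring and summing yields
\begin{equation*}
d_\phi(\bx,\bz)^2 \;=\; d_\phi(\by,\bz)^2 \;=\; \tfrac{1}{4}\sum_{i=1}^K\bigl(h(y_i)-h(x_i)\bigr)^2 \;=\; \tfrac{1}{4}\,d_\phi(\bx,\by)^2,
\end{equation*}
from which the three claimed equalities follow by taking square roots.

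There is no real obstacle here: the main content of the lemma sits inside the already-established formula (\ref{solgeo})--(\ref{geodes2}), which linearizes the geodesic structure through the change of variables $h$. The only thing worth a brief remark in the write-up is that the existence of $\bz\in\cM$ (i.e., that $\tfrac{1}{2}(h(x_i)+h(y_i))$ lies in the range of $h$) is automatic because $h$ is a strictly increasing continuous map of the interval $\cJ$ onto an interval, so its range is convex. I would also flag the apparent sign typo in the lemma's displayed formula, since (\ref{geodes2}) makes clear the quantity on the right should be $\tfrac{1}{2}\bigl(\sum_i(h(y_i)-h(x_i))^2\bigr)^{1/2}$.
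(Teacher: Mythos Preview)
Your proposal is correct and matches the paper's approach: the paper does not spell out a proof but simply presents the lemma as an immediate consequence of the explicit formulas (\ref{solgeo}) and (\ref{geodes2}), which is exactly the computation you carry out. Your observation about the sign typo in the displayed expression is also right.
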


\subsection{Comparison of Bregman and Geodesic distances}
Here we shall examine the relationship between the $\phi$-divergence and the distance induced by $\phi.$ Observe to begin with, for any three times continuously differentiable function, we have $\phi(y)-\phi(x)=\int_x^y\phi'(u)du.$ Applying this once more under the integral sign, and rearranging a bit, we obtain
\begin{equation}\label{basic}
\phi(y) - \phi(x) -(y - x)\phi'(x) = \int_x^y\phi''(u)(y - u)du.
\end{equation}
Notice that the left hand side is the building block of the $\phi$-divergence. To make the distance (\ref{geodes2}) appear on the right hand side of (\ref{basic}), we rewrite it as follows. Use the fact that $h'(x)=(\phi''(x))^{1/2},$ and invoke the previous identity applied to $h$ to obtain
$$\int_x^y h'(u)(y-u)dh(u) = \int_x^y \Big(h(y) - h(u) -\int_u^y h''(\xi)(y-\xi)d\xi\Big)dh(u).$$
Notice now that
$$\int_x^y \Big(h(y) - h(u)\Big)dh(u) = \frac{1}{2}\big(h(y) - h(x)\big)^2.$$
With this, it is clear that
$$\phi(y)-\phi(x)-(y-x)\phi'(x) = \frac{1}{2}\big(h(y) - h(x)\big)^2 - \int_x^y\Big(\int_u^y h''(\xi)(y-\xi)d\xi\Big)dh(u)$$
We can use the previous comments to complete the proof of the following result.
\begin{theorem}\label{compare}
With the notations introduced above, suppose furthermore that $\phi'''(x)$ (and therefore $h''(x)$) has a constant sign. Then
\begin{eqnarray}\label{compare2}
\bdelta_\phi(\by,\bx)^2 \leq \frac{1}{2}d_\phi(\by,\bx)^2,\;\;\;\mbox{if}\;\;\;\phi''' > 0.\\\label{compare2.1}
\bdelta_\phi(\by,\bx)^2 \geq \frac{1}{2}d_\phi(\by,\bx)^2,\;\;\;\mbox{if}\;\;\;\phi''' < 0.
\end{eqnarray}
\end{theorem}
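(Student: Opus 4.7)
The plan is to reduce the theorem to a sign analysis of the remainder
\[
R(x,y) \;:=\; \int_x^y \!\Bigl(\int_u^y h''(\xi)(y-\xi)\,d\xi\Bigr)\,dh(u),
\]
which already appears in the identity
\[
\phi(y)-\phi(x)-(y-x)\phi'(x) \;=\; \tfrac{1}{2}\bigl(h(y)-h(x)\bigr)^2 - R(x,y)
\]
derived just above the statement. By the separable form (\ref{convf}) and the definitions (\ref{breg}) and (\ref{geodes2}), applying this scalar identity coordinatewise and summing yields
\[
\bdelta_\phi(\by,\bx)^2 \;=\; \tfrac{1}{2}\,d_\phi(\by,\bx)^2 \;-\; \sum_{i=1}^K R(x_i,y_i),
\]
so the two claimed inequalities (\ref{compare2})--(\ref{compare2.1}) are equivalent to showing that $\sum_i R(x_i,y_i)$ has the sign of $\phi'''$.

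To extract the sign of $R(x,y)$, I would first transfer the hypothesis from $\phi'''$ to $h''$: because $h'=(\phi'')^{1/2}>0$ and $h''(x)=\phi'''(x)/[2(\phi''(x))^{1/2}]$, the sign of $h''$ equals the sign of $\phi'''$. Next, swap the order of integration in the double integral via Fubini on the triangle $\{x\le u\le \xi\le y\}$. Writing $dh(u)=h'(u)\,du$ and integrating out $u$ collapses $R$ to a single integral,
\[
R(x,y) \;=\; \int_x^y h''(\xi)\,(y-\xi)\bigl(h(\xi)-h(x)\bigr)\,d\xi.
\]
Since $h$ is monotone increasing, the product $(y-\xi)(h(\xi)-h(x))$ is non-negative for every $\xi$ between $x$ and $y$, so the integrand inherits the sign of $h''(\xi)$. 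Plugging the resulting bound on $\sum_i R(x_i,y_i)$ back into the identity above gives the two inequalities.

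The main obstacle I foresee is keeping track of orientations in the case $y_i<x_i$: the outer integral $\int_x^y$ and the inner $\int_u^y$ both reverse, the weight $(y-\xi)$ changes sign on the interval actually traversed, and one needs all these flips to combine in a way compatible with the sign of $h''$. The cleanest remedy is the substitution $v=h(u)$, which turns the outer Stieltjes integral into an ordinary Lebesgue integral on the interval with endpoints $h(x)$ and $h(y)$ and makes the sign of $R$ transparent; alternatively, one can reduce to the case $y_i\ge x_i$ by the symmetry in the distance term $\tfrac12(h(y_i)-h(x_i))^2$. Once the bookkeeping is settled, the rest of the argument is a direct substitution and no further estimates are required.
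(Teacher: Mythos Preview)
Your approach is the paper's own: the paper derives exactly the identity
\[
\phi(y)-\phi(x)-(y-x)\phi'(x)=\tfrac12\bigl(h(y)-h(x)\bigr)^2-R(x,y)
\]
and then simply declares the theorem a consequence, so your Fubini swap to expose the sign of $R$ is in fact more explicit than what the paper supplies.

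The gap is the one you yourself flag, and your proposed remedies do not close it. The substitution $v=h(u)$ only reparametrizes $R$; it does not alter its sign. The ``symmetry'' reduction fails because while $\tfrac12 d_\phi(\by,\bx)^2$ is symmetric in its arguments, $\bdelta_\phi(\by,\bx)^2$ is not, so swapping $x_i\leftrightarrow y_i$ changes the left-hand side of the inequality you are trying to prove. If you run your own Fubini computation with $y<x$ you get
\[
R(x,y)=\int_y^x h''(\xi)\,(y-\xi)\bigl(h(x)-h(\xi)\bigr)\,d\xi,
\]
and on $[y,x]$ the weight $(y-\xi)(h(x)-h(\xi))$ is \emph{nonpositive}, so $R$ now carries the opposite sign of $h''$ and the inequality reverses. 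Concretely, for $\phi(x)=e^x$ (so $\phi'''>0$), $K=1$, $x=0$, $y=-2$ one finds $\bdelta_\phi(y,x)^2=1+e^{-2}\approx 1.135$ while $\tfrac12 d_\phi(y,x)^2=2(1-e^{-1})^2\approx 0.799$, violating (\ref{compare2}). The paper does not address this case either; as stated, the inequalities hold only on coordinates with $y_i\ge x_i$.
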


This means that, for example, in the first case, a minimizer with respect to the geodesic distance, yields a smaller approximation error that the corresponding minimizer with respect to the divergence. The inequalities in Theorem \ref{compare} lead to the following result
\begin{theorem}\label{compmeans1}
Let $\{\bx_1,...,\bx_n\}$ is be set of points in $\cM,$ and $\bx_\phi^*$ and $\bx_d^*$ respectively denote the points in $\cM$ closer to that set in $\phi$-divergence and geodesic distance. Then, for example, when (\ref{compare2.1}) holds, 
$$\sum_{i=1}^n \bdelta_\phi(\bx_i,\bx_\phi^*)^2 \geq \frac{1}{2}\sum_{i=1}^n d_\phi(\bx_i,\bx_d^*)^2,$$
\end{theorem}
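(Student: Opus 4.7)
The plan is a short two-step chain that combines the pointwise comparison from Theorem \ref{compare} with the variational characterization of $\bx_d^*$. The asymmetry of $\bdelta_\phi$ does not cause trouble because the order of arguments in $\bdelta_\phi(\bx_i,\bx_\phi^*)$ already matches the order in which Theorem \ref{compare} is stated: taking $\by=\bx_i$ and $\bx=\bx_\phi^*$ in (\ref{compare2.1}) gives, termwise,
\begin{equation*}
\bdelta_\phi(\bx_i,\bx_\phi^*)^2 \;\geq\; \tfrac{1}{2}\,d_\phi(\bx_i,\bx_\phi^*)^2, \qquad i=1,\dots,n.
\end{equation*}

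Summing over $i$ I would then invoke the defining property of $\bx_d^*$: since it is the minimizer of $\sum_i d_\phi(\bx_i,\by)^2$ over $\by\in\cM$, we automatically have $\sum_i d_\phi(\bx_i,\bx_\phi^*)^2 \geq \sum_i d_\phi(\bx_i,\bx_d^*)^2$. Chaining the two inequalities produces
\begin{equation*}
\sum_{i=1}^n \bdelta_\phi(\bx_i,\bx_\phi^*)^2 \;\geq\; \tfrac{1}{2}\sum_{i=1}^n d_\phi(\bx_i,\bx_\phi^*)^2 \;\geq\; \tfrac{1}{2}\sum_{i=1}^n d_\phi(\bx_i,\bx_d^*)^2,
\end{equation*}
which is the claimed bound. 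The symmetric case (when $\phi'''>0$ and (\ref{compare2}) is used) would be handled analogously, reversing the roles of $\bx_\phi^*$ and $\bx_d^*$ and the direction of the final inequality.

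There is no real obstacle here; the only point that requires care is matching argument orders so that the pointwise inequality of Theorem \ref{compare} is applied at $\by=\bx_i$, $\bx=\bx_\phi^*$ (rather than at the geodesic optimizer), so that the subsequent optimality step for $\bx_d^*$ can be chained in the correct direction. Note that existence of the two minimizers $\bx_\phi^*$ and $\bx_d^*$ is implicit in the statement; one might want to remark that $\bx_d^*$ exists by lifting through the isometry $h$ to Euclidean space, where the Euclidean mean of $\{h(\bx_i)\}$ provides it explicitly, while $\bx_\phi^*$ is the arithmetic mean (by Theorem \ref{bpdiv} applied to the empirical measure).
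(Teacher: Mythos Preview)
Your proof is correct and essentially identical to the paper's: both apply the pointwise inequality (\ref{compare2.1}) termwise and then invoke the minimality of $\bx_d^*$; the only cosmetic difference is that the paper first writes the inequality for an arbitrary $\bx$, minimizes the right-hand side, and then specializes the left-hand side to $\bx_\phi^*$, whereas you specialize to $\bx_\phi^*$ at the outset.
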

\begin{proof}$\,$
If (\ref{compare2.1}) holds, then $\sum_{i=1}^n \bdelta_\phi(\bx_i,\bx)^2 \geq \frac{1}{2}\sum_{i=1}^n d_\phi(\bx_i,\bx)^2$ for any $\bx\in\cM.$ Therefore, to begin with, since $\bx_d^*$ minimizes the right hand side, we have  $\sum_{i=1}^n \bdelta_\phi(\bx_i,\bx)^2 \geq \frac{1}{2}\sum_{i=1}^n d_\phi(\bx_i,\bx_d^*)^2$ for any $\bx\in\cM.$ Now minimizing with respect to $\bx$ on the left hand side of this inequality we obtain the desired result.
\end{proof}

\noindent That is, the approximation error is smaller for the minimizer computed with the geodesic distance than that computed for the divergence. We postpone the explicit computation of $\bx_d^*$ to Section 4, when we show how to compute sample estimators.

{\bf Comment} Note that we can think of (\ref{basic}) as a way to construct a convex function starting from its second derivative. What the previous result asserts that if we start from a positive but strictly decreasing function, we generate a divergence satisfying (\ref{compare2.1}), whereas if we start from a positive and strictly increasing function, we generate a divergence satisfying (\ref{compare2}). This is why we included the second and third examples. Even though they would seem to be related by a simple reflection at the origin, their predictive properties are different.

Note that when $\phi'''$ is identically zero as in the first example of the list in Table \ref{tab1}, the two distances coincide. This example is the first case treated in the examples described below. The other examples are standard examples used to define Bregman divergences.

Note as well that when $\phi(x)=x^p$ with $1<p<2$ the derived distance has smaller prediction error than that of the prediction error in divergence, whereas when $2<p$ the prediction error in divergence is smaller than the prediction error in its derived distance. And we already noted that for $p=2$ both coincide. But to compare the $d$-metric with the Euclidean metric does not seem an easy task.

\subsection{Examples of distances related to a Bregman divergence}
\subsubsection{Case 1: $\phi = x^2/2$}
In this case $\phi''(x)=1$ and $\phi'''(x)=0.$ The geodesics are the straight lines in $\mathbb{R}^K$ and the induced distance is the standard Euclidean distance
$$d_\phi(\bx,\by)^2 = \sum_{i=1}^K (x_i - y_i)^2.$$
\subsubsection{Case 2: $\phi(x)=e^x$}
Now $\phi''(x)=\phi'''(x)=e^x.$ The solution to the geodesic equation (\ref{geoeq}) is given by 
$x_i(t)=2\ln\Big(e^{x_i/2} + k_it\Big),\;i=1,...,K$ and therefore $k_i=e^{y_i/2}-e^{x_i/2}.$ The geodesic distance between $\bx$ and $\by$ is given by
$$d_\phi(\bx,\by)^2 = \sum_{i=1}^K (e^{y_i/2}-e^{x_i/2})^2.$$
\subsubsection{Case 3: $\phi(x)=e^{-x}$} 
Now $\phi''(x)=e^{-x}$ but $\phi'''(x)=-e^{-x}.$ The solution to the geodesic equation (\ref{geoeq}) is given by 
$x_i(t)=-2\ln\Big(e^{-x_i/2} + k_it\Big),\;i=1,...,K$ and therefore $k_i=e^{-y_i/2}-e^{-x_i/2}.$ The geodesic distance between $\bx$ and $\by$ is given by
$$d_\phi(\bx,\by)^2 = \sum_{i=1}^K (e^{-y_i/2}-e^{-x_i/2})^2.$$
\subsubsection{Case 4: $\phi(x)=x\ln x$}
This time our domain is $\cM=(0,\infty)^K$ and $\phi''(x)=1/x$ whereas $\phi'''(x)=-1/x^2.$  The solution to the geodesic (\ref{geoeq}) is given by $x_i(t) =\Big(\sqrt{x_i} + k_it\Big)^2,\;i=1,...,K$ where $k_i=\sqrt{y_i}-\sqrt{x_i}.$ Therefore, the geodesic distance between $\bx$ and $\by$ is
$$d_\phi(\bx,\by)^2 = \sum_{i=1}^K (\sqrt{y_i} - \sqrt{x_i})^2.$$
This look similar to the Hellinger distance used in probability theory. See Pollard's \cite{P} 
\subsubsection{Case 5: $\phi(x)=-\ln x$}
To finish, we shall consider another example on $\cM=(0,\infty)^K.$ Now, $\phi''(x)=1/x^2$ and $\phi'''(x)=-1/x^3.$ The geodesics turn out to be given by $x_i(t)=x_ie^{kt}$ where $k_i=\ln\big(y_i/x_i)$ which yields the representation $\bx(t)=\bx^{(1-t)}\by^{t}.$ Recall that all operations are to be understood componentwise ($d$-vectors are  function on $[1,...,K]$). The distance between $\bx$ and $\by$ is now given by
$$d_\phi(\bx,\by)^2 = \sum_{i=1}^K (\ln y_i - \ln x_i)^2.$$

\subsection{The semi-parallelogram law of the geodesic distances}
As a consequence of Lemma \ref{midpt} and the way the geodesic distances are related to the Euclidean distance through a bijection, we have the following result:
\begin{theorem}\label{spl}
With the notations introduced in the four examples listed above, the sets $\cM$ with the corresponding geodesic distances, satisfy the semi-parallelogram law. that is in all four cases considered, for any $\bx,\by\in\cM,$ there exists a $\bz$ obtained as in Lemma \ref{midpt}, such that for any $\bv\in\cM$ we have
$$d_\phi(\bx,\by)^2 + 4d_\phi(\bv,\bz)^2 \leq d_\phi(\bv,\bx)^2 + 2d_\phi(\bv,\by)^2.$$
\end{theorem}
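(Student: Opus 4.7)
The plan is to push the whole problem over to Euclidean space via the isometry $h$ and then invoke the ordinary (Euclidean) parallelogram identity. Recall that the map $h:\cM\to\mathbb{R}^K$ defined coordinatewise by $h(\bx)_i = h(x_i)$ was shown to be an isometry when $\cM$ is equipped with $d_\phi$ and $\mathbb{R}^K$ with its usual Euclidean norm; indeed formula (\ref{geodes2}) says precisely that $d_\phi(\bx,\by)=\|h(\bx)-h(\by)\|$.

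The key step is the identification of the point $\bz$ of Lemma \ref{midpt} as the $h$-preimage of the Euclidean midpoint. By the definition $z_i = H\!\left(\tfrac{1}{2}(h(x_i)+h(y_i))\right)$ we have $h(\bz) = \tfrac{1}{2}(h(\bx)+h(\by))$ in $\mathbb{R}^K$. Setting $\tilde\bx = h(\bx)$, $\tilde\by=h(\by)$, $\tilde\bv=h(\bv)$, $\tilde\bz = h(\bz) = (\tilde\bx+\tilde\by)/2$, the statement to be proved becomes
$$\|\tilde\bx-\tilde\by\|^2 + 4\|\tilde\bv-\tilde\bz\|^2 \;\leq\; 2\|\tilde\bv-\tilde\bx\|^2 + 2\|\tilde\bv-\tilde\by\|^2.$$
(Here I read the displayed inequality in the theorem as the standard semi-parallelogram form, with the coefficient $2$ on the first term on the right; this matches the classical Bruhat--Tits formulation referenced earlier.)

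At this point the proof is essentially a one-line computation: for any three vectors in a Hilbert space with $\tilde\bz$ the midpoint of $\tilde\bx$ and $\tilde\by$, the Euclidean parallelogram law gives equality
$$\|\tilde\bv-\tilde\bx\|^2 + \|\tilde\bv-\tilde\by\|^2 = 2\|\tilde\bv-\tilde\bz\|^2 + \tfrac{1}{2}\|\tilde\bx-\tilde\by\|^2,$$
which upon multiplying by $2$ and rearranging is exactly the required inequality (in fact with equality). Transporting back through $h^{-1}=H$ finishes the argument in each of the four (really five, including Case 1) examples, since in each case $h$ is a well-defined bijection onto its image in $\mathbb{R}^K$.

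The only real subtlety, and the one the theorem is implicitly relying on, is that $h$ actually is a bijective isometry onto a \emph{convex} subset of $\mathbb{R}^K$ so that the midpoint $\tilde\bz$ lies in $h(\cM)$ and hence has a preimage $\bz\in\cM$. This is immediate in Cases 1, 2, 3 where $\cM=\mathbb{R}^K$ and $h$ maps onto $\mathbb{R}^K$ or an open half-space, and in Cases 4, 5 where $\cM=(0,\infty)^K$ and $h$ maps onto $(0,\infty)^K$ (via $2\sqrt{x}$) and $\mathbb{R}^K$ (via $\ln x$) respectively — all convex images. So no obstruction arises; the ``hard'' step is really just recognizing that Lemma \ref{midpt} supplies the Euclidean midpoint under the isometry, after which the semi-parallelogram law reduces to its trivial Hilbert-space counterpart.
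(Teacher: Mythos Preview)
Your argument is correct and matches the paper's own approach: the paper does not give a formal proof of Theorem~\ref{spl} but simply states it ``as a consequence of Lemma~\ref{midpt} and the way the geodesic distances are related to the Euclidean distance through a bijection,'' which is precisely the isometry-to-$\mathbb{R}^K$ plus Euclidean-parallelogram-law argument you spell out. You also correctly flag the apparent typo in the displayed inequality (the right-hand side should read $2d_\phi(\bv,\bx)^2 + 2d_\phi(\bv,\by)^2$), and your observation that equality in fact holds is right and worth keeping.
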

That is, for separable Bregman divergences, the induced Riemannian geometry is a Tits-Bruhat geometry. The semi-parallelogram property is handy in proofs of uniqueness.

\section{$\cL_2$-conditional expectations related to Riemannian metrics derived from a Bregman divergences}
As we do not have a distinguished point in $\cM$ which is the identity with respect to a commutative operation on $\cM,$ in order to define a squared norm for $\cM$-valued random variables we begin by introducing the following notation.
\begin{definition}\label{norm}
We shall say that a $\cM$-valued random variable is integrable or square integrable, and write $\bX\in\cL^\phi_p,$ (for $p=1,2$) whenever
$$D_\phi(\bX,\bx_0)^p = E\Big(d_\phi(\bX,\bx_0)\Big)^p] < \infty$$
\noindent for some $\bx_0\in\cM.$ It is clear from the triangular inequality that this definition is independent of $\bx_0.$ 
\end{definition}
But more important in the following simple result
\begin{lemma}\label{simple2}
With the notations introduced above, from (\ref{geodes2}) if follows that $\bX\in\cL^\phi_p,$ is equivalent to $h(\bX)\in\cL_p.$ 
\end{lemma}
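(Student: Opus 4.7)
The plan is essentially to observe that Lemma \ref{simple2} is a near-tautological consequence of formula (\ref{geodes2}) combined with Minkowski's inequality, so the proof will be short.

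First, I would invoke (\ref{geodes2}) directly: for any $\bx_0\in\cM$,
$$d_\phi(\bX,\bx_0) = \Bigl(\sum_{i=1}^K (h(X_i)-h(x_{0,i}))^2\Bigr)^{1/2} = \|h(\bX)-h(\bx_0)\|_2,$$
where $\|\cdot\|_2$ is the Euclidean norm on $\mathbb{R}^K$. So the condition $D_\phi(\bX,\bx_0)^p=E[\|h(\bX)-h(\bx_0)\|_2^p]<\infty$ is literally the statement that the random vector $h(\bX)-h(\bx_0)$ lies in the $\cL_p$ space of $\mathbb{R}^K$-valued random variables.

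Second, I would remove the constant shift $h(\bx_0)$. Since $h(\bx_0)$ is deterministic with finite Euclidean norm, Minkowski's inequality (or the elementary bound $(a+b)^p\le 2^{p-1}(a^p+b^p)$ for $p\ge 1$) gives
$$\|h(\bX)\|_2 \le \|h(\bX)-h(\bx_0)\|_2 + \|h(\bx_0)\|_2, \qquad \|h(\bX)-h(\bx_0)\|_2 \le \|h(\bX)\|_2+\|h(\bx_0)\|_2,$$
so $E[\|h(\bX)-h(\bx_0)\|_2^p]<\infty$ if and only if $E[\|h(\bX)\|_2^p]<\infty$. This simultaneously shows that the defining condition is independent of the reference point $\bx_0$, which is the remark made just after Definition \ref{norm}.

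Finally, since $\mathbb{R}^K$ is finite-dimensional, $E[\|h(\bX)\|_2^p]<\infty$ is equivalent to $E[|h(X_i)|^p]<\infty$ for every coordinate $i=1,\dots,K$, i.e., to $h(\bX)\in\cL_p$ componentwise, which is what the statement means. There is no real obstacle here; the only mild point worth making explicit is the equivalence of the Euclidean norm with the coordinatewise absolute value on $\mathbb{R}^K$, which legitimizes interpreting $h(\bX)\in\cL_p$ as membership of each component in the scalar space $\cL_p$.
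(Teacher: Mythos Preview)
Your proof is correct and is precisely the argument the paper intends: it does not spell out a proof of this lemma at all, treating it as an immediate consequence of (\ref{geodes2}), and your write-up simply makes explicit the triangle-inequality step removing the reference point $\bx_0$ and the equivalence of the Euclidean and coordinatewise norms on $\mathbb{R}^K$. Nothing further is needed.
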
 

With identity (\ref{geodes2}) in mind, it is clear that  $\bX,\bY\in\cL^\phi_2,$ the distance on $\cM$ extends to a distance between random variables by
\begin{equation}\label{dist3}
D_\phi(\bX,\bY)^2 = E[\Big(d_\phi(\bX,\bY)\Big)^2] = E[\|h(\bX) - h(\bY)\|^2].
\end{equation}
Now that we have this definition in place, the extension of Theorem (\ref{compare}) to this case can be stated as follows.
\begin{theorem}\label{compare3}
For any pair $\bX,\bY$ of $\cM$-valued random variables such that the quantities written below are finite, we have
\begin{eqnarray}\label{compare3.1}
\Delta_\phi(\bY,\bX)^2 \leq \frac{1}{2}D_\phi(\bY,\bX)^2,\;\;\;\mbox{if}\;\;\;\phi''' > 0.\\\label{compare3.2}
\Delta_\phi(\bY,\bX)^2 \geq \frac{1}{2}d_\phi(\bY,\bX)^2,\;\;\;\mbox{if}\;\;\;\phi''' < 0.\\\nonumber
\end{eqnarray}
\end{theorem}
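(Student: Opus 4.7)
The plan is to lift the pointwise inequalities of Theorem \ref{compare} to the random-variable setting by integrating against $\bbP$. Since both $\Delta_\phi$ and $D_\phi$ are defined as expectations of the corresponding pointwise quantities (Definition \ref{divrv} and equation (\ref{dist3})), monotonicity of the Lebesgue integral will do essentially all the work; the only care needed is to verify that the pointwise inequality may indeed be applied $\omega$-by-$\omega$ and that all expectations exist.

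First, I would fix a version of the random variables $\bX, \bY \colon \Omega \to \cM$ and observe that for each $\omega \in \Omega$ the pair $(\bX(\omega), \bY(\omega))$ consists of two deterministic points in $\cM$. Theorem \ref{compare} therefore applies to them, giving, in the case $\phi''' > 0$,
$$\bdelta_\phi(\bY(\omega), \bX(\omega))^2 \leq \tfrac{1}{2}\, d_\phi(\bY(\omega), \bX(\omega))^2 \qquad \text{for every } \omega \in \Omega,$$
with the reverse inequality when $\phi''' < 0$. Both sides are nonnegative measurable functions of $\omega$, so the integrals against $\bbP$ are well-defined (possibly as $+\infty$).

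Next, I would integrate both sides. By the finiteness hypothesis in the statement, together with Lemma \ref{simple2} (which identifies $\bX, \bY \in \cL^\phi_2$ with $h(\bX), h(\bY) \in \cL_2$) and the finiteness of $\Delta_\phi(\bY, \bX)$, both $E[\bdelta_\phi(\bY,\bX)^2]$ and $E[d_\phi(\bY,\bX)^2]$ are finite. By monotonicity of the expectation, the pointwise inequality carries over to
$$E\bigl[\bdelta_\phi(\bY, \bX)^2\bigr] \leq \tfrac{1}{2}\, E\bigl[d_\phi(\bY, \bX)^2\bigr]$$
when $\phi''' > 0$, and to the reverse inequality when $\phi''' < 0$. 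Using Definition \ref{divrv} on the left and (\ref{dist3}) on the right then rewrites these as (\ref{compare3.1}) and (\ref{compare3.2}), respectively.

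There is no real obstacle here; the substance of the result is already in Theorem \ref{compare}, and the only thing to check is measurability of $\omega \mapsto \bdelta_\phi(\bY(\omega), \bX(\omega))$ and $\omega \mapsto d_\phi(\bY(\omega), \bX(\omega))$, which follows from the continuity of $\bdelta_\phi$ and $d_\phi$ (the latter via the explicit representation (\ref{geodes2}) in terms of the continuous map $h$) together with measurability of $\bX$ and $\bY$. If I wanted to be tidy, I would note at the outset that the assumption that the quantities in (\ref{compare3.1})--(\ref{compare3.2}) are finite implicitly requires $\bX, \bY \in \cL^\phi_2$ together with the integrability conditions of Definition \ref{divrv}, so Fubini-style concerns about interchanging the pointwise inequality with the integral do not arise.
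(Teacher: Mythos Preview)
Your proposal is correct and matches the paper's approach: the paper does not give an explicit proof of Theorem \ref{compare3}, introducing it simply as ``the extension of Theorem (\ref{compare}) to this case,'' i.e., the pointwise inequalities of Theorem \ref{compare} integrated against $\bbP$. Your write-up just fills in the routine details (measurability, finiteness, monotonicity of the integral) that the paper leaves implicit.
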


We can now move on to the determination of best predictors in the $D_\phi$ distance.
\begin{theorem}\label{dcondexp}
Let $(\Omega,\cF,\bbP)$ be a probability space and let $\cG$ be a sub-$\sigma$-algebra of $\cF.$ Let $X$ be a $\cM$-valued random variable such that $h(\bX)$ is $\bbP$-square integrable. Then 
$$E_d[\bX|\cG] = H\Big(E[h(\bX)|\cG]\Big).$$
\end{theorem}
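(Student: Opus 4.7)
The plan is to reduce the problem to the classical $L_2$ best-prediction statement by pushing everything through the isometry $h$. First I would use the representation (\ref{dist3}) of the extended distance, namely
\begin{equation*}
D_\phi(\bX,\bY)^2 = E\bigl[\|h(\bX) - h(\bY)\|^2\bigr],
\end{equation*}
so that minimizing $D_\phi(\bX,\bY)^2$ over $\cG$-measurable $\cM$-valued $\bY$ with $h(\bY)\in\cL_2$ is the same as minimizing $E[\|h(\bX)-\bZ\|^2]$ over $\cG$-measurable $\bZ$ with values in $h(\cJ)^K$ and $\bZ\in\cL_2$, where the correspondence $\bZ=h(\bY)$, $\bY=H(\bZ)$ is a bijection (recall $h$ is strictly increasing, hence invertible on $\cJ$, and operates componentwise).

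Next I would invoke the standard $\cL_2$ projection theorem: for the $\mathbb{R}^K$-valued square integrable random vector $h(\bX)$, the unique $\cG$-measurable minimizer of $E[\|h(\bX)-\bZ\|^2]$ over all $\cG$-measurable $\bZ\in\cL_2$ is $\bZ^\ast=E[h(\bX)\mid\cG]$. Applying $H$ componentwise then gives the candidate best predictor $\bY^\ast = H(E[h(\bX)\mid\cG])$.

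The one point that needs justification, and which I expect to be the main (if minor) obstacle, is that the unconstrained minimizer $\bZ^\ast$ actually lies in the admissible set $h(\cJ)^K$ almost surely, so that $H(\bZ^\ast)$ is well defined and $\cM$-valued. This follows from the fact that $h(\cJ)$ is an interval, hence convex: since $h(\bX)$ takes values in the convex set $h(\cJ)^K$, so does its conditional expectation $E[h(\bX)\mid\cG]$, by the conditional version of Jensen's inequality applied to the indicator of a half-space (or directly by the fact that conditional expectation preserves convex constraints coordinatewise, each coordinate of $h(\bX)$ lying in the interval $h(\cJ)$).

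Putting these pieces together, $\bY^\ast = H(E[h(\bX)\mid\cG])$ is an admissible $\cG$-measurable $\cM$-valued random variable, and by construction it achieves the infimum of $D_\phi(\bX,\bY)^2$ among such $\bY$. Uniqueness (up to $\bbP$-null sets) is inherited from the uniqueness of the Euclidean conditional expectation through the bijection $h$. Hence $E_d[\bX\mid\cG] = H(E[h(\bX)\mid\cG])$, as claimed.
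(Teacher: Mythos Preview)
Your proof is correct and follows exactly the approach the paper has in mind: the paper does not actually write out a proof of this theorem, treating it as immediate from Lemma~\ref{simple2} and the representation~(\ref{dist3}), which reduce the $D_\phi$-minimization to the classical $\cL_2$ projection via the isometry $h$. Your argument makes this reduction explicit, and you even address a point the paper glosses over, namely that $E[h(\bX)\mid\cG]$ stays in the interval $h(\cJ)^K$ so that $H$ can be applied; this is a welcome bit of extra care rather than a departure in strategy.
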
 
Keep in mind that the both $h$ and its inverse $H$ act componentwise. This theorem has a curious corollary, to wit:
\begin{corollary}[Intertwining]
With the notations in the statement of the last theorem, we have
$$h\big(E_d[\bX|\cG]\Big) = \Big(E[h(\bX)|\cG]\Big).$$
\end{corollary}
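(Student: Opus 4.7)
The plan is to obtain the corollary as an immediate consequence of Theorem \ref{dcondexp} by applying $h$ to both sides of the formula $E_d[\bX|\cG] = H(E[h(\bX)|\cG])$. Since $h$ is defined componentwise on $\cM$ by (\ref{chanvar}) and $H = h^{-1}$ (also understood componentwise), the composition $h\circ H$ is the identity on each coordinate of the image of $h$.

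Concretely, I would first verify that the right-hand side $E[h(\bX)|\cG]$ takes values in the image $h(\cJ)^K$, so that $H$ applied to it is well-defined almost surely. This is where the hypothesis $h(\bX)\in\cL_2$ is used, together with the fact that $h:\cJ\to\mathbb{R}$ is a strictly increasing bijection onto its image: since each coordinate $h(X_i)$ lies in $h(\cJ)$ almost surely, the conditional expectation $E[h(X_i)|\cG]$ also lies in the closure of that interval, and by strict convexity of $\phi$ (so $h'>0$) the construction in Theorem \ref{dcondexp} places the result back in $\cJ^K = \cM$.

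Given this, the identity follows in one line: applying $h$ componentwise to the representation from Theorem \ref{dcondexp} yields
\begin{equation*}
h\bigl(E_d[\bX|\cG]\bigr) \;=\; h\bigl(H(E[h(\bX)|\cG])\bigr) \;=\; E[h(\bX)|\cG],
\end{equation*}
which is the claimed intertwining.

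There is essentially no obstacle; the only thing to be careful about is the well-definedness noted in the second paragraph, i.e.\ ensuring that $H$ acts on a random variable that indeed takes values in $h(\cM)$. Once that is settled, the corollary is a tautology coming from $h\circ H = \text{id}$, and the content is really conceptual: the map $h$ conjugates the $d$-conditional expectation on $\cM$-valued random variables into the ordinary $\cL_2$ conditional expectation on $\mathbb{R}^K$-valued random variables, mirroring the fact (already observed after (\ref{geodes2})) that $h$ is an isometry from $(\cM, d_\phi)$ onto its Euclidean image.
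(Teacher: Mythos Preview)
Your proposal is correct and matches the paper's approach: the corollary is stated without proof as an immediate consequence of Theorem~\ref{dcondexp}, and your one-line argument $h(E_d[\bX|\cG]) = h(H(E[h(\bX)|\cG])) = E[h(\bX)|\cG]$ via $h\circ H = \mathrm{id}$ is exactly what is intended. Your additional remarks on well-definedness are more careful than the paper itself, which simply takes for granted that the expressions make sense.
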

\subsection{Comparison of prediction errors}
As a corollary of Theorem \ref{compare3} to compare the prediction errors in the $d$-metric or in divergence. 
\begin{theorem}\label{prederror}
With the notations of Theorem \ref{compare3}, we have
\begin{eqnarray}\label{compare3.1}
\Delta_\phi(\bX,E[\bX|\cG])^2 \leq D_\phi(\bX,E_d[\bX|\cG])^2,\;\;\;\mbox{if}\;\;\;\phi''' > 0.\\\label{compare3.2}
\Delta_\phi(\bX,E[\bX|\cG])^2 \geq D_\phi(\bX,E_d[\bX|\cG])^2,\;\;\;\mbox{if}\;\;\;\phi''' < 0.\nonumber
\end{eqnarray}
\end{theorem}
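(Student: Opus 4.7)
The plan is to combine the pointwise (in $\omega$) comparison between divergence and geodesic distance of Theorem \ref{compare3} with the variational characterizations of the two predictors. Write $\bY^* := E[\bX|\cG]$ for the minimizer of $\bY \mapsto \Delta_\phi(\bX,\bY)^2$ over $\cG$-measurable $\bY \in \cL_2$ (Theorem \ref{bpdiv}), and $\bZ^* := E_d[\bX|\cG] = H(E[h(\bX)|\cG])$ for the minimizer of $\bZ \mapsto D_\phi(\bX,\bZ)^2$ over the $\cG$-measurable candidates in $\cL_2^\phi$ (Theorem \ref{dcondexp}). The key observation is that each of these two predictors is admissible, though possibly suboptimal, for the optimization problem solved by the other; that is the lever I would pull.

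For the case $\phi''' > 0$: since $\bZ^*$ is $\cG$-measurable, it is admissible in the divergence minimization, so by the optimality of $\bY^*$ we have $\Delta_\phi(\bX,\bY^*)^2 \leq \Delta_\phi(\bX,\bZ^*)^2$. Applying Theorem \ref{compare3} to the pair $(\bX,\bZ^*)$ gives $\Delta_\phi(\bX,\bZ^*)^2 \leq \tfrac{1}{2} D_\phi(\bX,\bZ^*)^2 \leq D_\phi(\bX,\bZ^*)^2$, and chaining yields the stated inequality.

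For the case $\phi''' < 0$: the natural parallel is to apply Theorem \ref{compare3} to the pair $(\bX,\bY^*)$ (reading the right-hand side there as $D_\phi$, since the objects involved are random variables) to obtain $\Delta_\phi(\bX,\bY^*)^2 \geq \tfrac{1}{2}D_\phi(\bX,\bY^*)^2$, and then to use the optimality of $\bZ^*$ in the $D_\phi$-problem, i.e.\ $D_\phi(\bX,\bY^*)^2 \geq D_\phi(\bX,\bZ^*)^2$, to obtain $\Delta_\phi(\bX,\bY^*)^2 \geq \tfrac{1}{2}D_\phi(\bX,\bZ^*)^2$. The main obstacle is exactly here: the stated inequality omits the factor $\tfrac{1}{2}$, and this factor cannot be dropped from Theorem \ref{compare3} in general---it is already tight for $\phi(x)=x^2/2$, where $\bdelta_\phi(x,y)^2$ and $\tfrac{1}{2}d_\phi(x,y)^2$ coincide. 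Removing it would require a fundamentally different argument, for instance a direct coordinatewise comparison of $E[\phi(X_i) - \phi(E[X_i|\cG])]$ against $E[\,\mathrm{Var}(h(X_i)\,|\,\cG)\,]$, built from the identities $\Delta_\phi(\bX,\bY^*)^2 = E[\Phi(\bX)-\Phi(\bY^*)]$ (which follows from the $\cG$-measurability of $\nabla\Phi(\bY^*)$ together with $E[\bX-\bY^*\,|\,\cG]=0$) and $D_\phi(\bX,\bZ^*)^2 = E[\|h(\bX)-E[h(\bX)|\cG]\|^2]$. A simple two-point example with $\phi(x)=x\ln x$ in fact shows that the version without the factor $\tfrac{1}{2}$ can fail, so I would prove the factor-$\tfrac{1}{2}$ form in the second case and flag the missing $\tfrac{1}{2}$ in the stated theorem as a probable transcription slip.
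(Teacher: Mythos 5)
Your two ingredients are exactly the paper's own: the published proof of Theorem~\ref{prederror} likewise starts from Theorem~\ref{compare3}, substitutes the $D_\phi$-minimizer $E_d[\bX|\cG]$ of Theorem~\ref{dcondexp} on one side, and then minimizes the other side over $\cG$-measurable $\bY$ via Theorem~\ref{bpdiv}, so your case-one chain is the paper's argument in a slightly reshuffled order. Your diagnosis of the $\phi'''<0$ case is also exactly right, and sharper than the paper's own treatment: the paper's one-line proof passes from Theorem~\ref{compare3} directly to ``$\Delta_\phi(\bX,\bY)^2\ge D_\phi(\bX,E_d[\bX|\cG])^2$,'' silently dropping the factor $\tfrac12$, so the printed statement is precisely the transcription slip you suspected. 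Your counterexample claim checks out numerically: with $\phi(x)=x\ln x$, $K=1$, $\cG$ trivial, $X\in\{1,e^2\}$ equiprobable, and the honest primitive $h(x)=2\sqrt{x}$ of $(\phi'')^{1/2}$ (the entry $\sqrt{x}$ in Table~\ref{tab2} is off by a factor $2$ that is immaterial for $E_d$ but not for distances), one gets $\Delta_\phi(X,E[X])^2=e^2-\tfrac{1+e^2}{2}\ln\tfrac{1+e^2}{2}\approx 1.38$ while $D_\phi(X,E_d[X])^2=\mathrm{Var}\big(2\sqrt{X}\big)\approx 2.95$, violating the stated inequality.

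The one genuine gap is in your proposed repair: the factor-$\tfrac12$ form fails too, on the very same example, since $1.38<1.48\approx\tfrac12(2.95)$. The root cause is that Theorem~\ref{compare} (hence Theorem~\ref{compare3}), which both you and the paper invoke for arbitrary pairs, is only established --- and is only true --- when the first argument dominates the second componentwise: in the identity $\bdelta_\phi(y,x)^2=\tfrac12\big(h(y)-h(x)\big)^2-\int_x^y\big(\int_u^y h''(\xi)(y-\xi)\,d\xi\big)\,dh(u)$, the correction term changes sign with the orientation of integration, so for $\phi'''<0$ one has $\bdelta_\phi(y,x)^2\ge\tfrac12 d_\phi(y,x)^2$ when $y\ge x$ but the reverse inequality when $y\le x$ (check $\phi=x\ln x$, $y=1$, $x=4$: $\bdelta^2\approx 1.61<2=\tfrac12 d^2$). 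Since $\bX$ is never ordered against $E[\bX|\cG]$ or $E_d[\bX|\cG]$ outside degenerate cases, neither direction of Theorem~\ref{prederror} survives; in particular your case-one argument, though formally licensed by the paper, breaks at the step where you apply Theorem~\ref{compare3} to the unordered pair $(\bX,\bZ^*)$: for $\phi(x)=e^x$ with $X=0$ w.p.\ $0.1$ and $X=10$ w.p.\ $0.9$, one finds $\Delta_\phi(X,E[X])^2\approx 1.17\times 10^4$ but $D_\phi(X,E_d[X])^2\approx 7.8\times 10^3$, violating the $\phi'''>0$ claim as well. So the correct verdict is stronger than the one you reach: reinstating the $\tfrac12$ does not rescue the theorem; one needs either an ordering hypothesis on the pair or a genuinely integrated comparison of $E[\bdelta_\phi^2]$ against $\mathrm{Var}(h(\bX))$ of the kind you sketch at the end of your proposal.
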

The proof is simple. For the first case say, begin with (\ref{compare3.2}) and since the right hand side decreases by replacing $\bY$ with $E_d[\bX|\cG]$ we have $\Delta_\phi(\bX,\bY])^2 \geq D_\phi(\bX,E_d[\bX|\cG])^2$ for any $\bY$ with the appropriate integrability. Now, minimize the left hand side of las inequality with respect to $\bY$ to obtain the desired conclusion.

\subsection{Examples of conditional expectations}
Even though the contents of the next table are obvious, they are worth recording. There we display the appearance of the conditional expectations of a $\cM$-valued random variable $\bX$ in the metrics derived from the divergences listed in Table \ref{tab1}. 

\begin{table}[h]
\centering
\begin{tabular}{|c|c|c|c|}\hline
Domain & $\phi$ & $h$  &Conditional Expectation\\\hline 
$\mathbb{R}$ & $x^2$ & $x$ & $E[\bX|\cG]$\\\hline
$\mathbb{R}$ & $e^x$ & $e^{x/2}$ & $2\ln\Big(E[e^{\bX/2}|\cG]\Big)$\\\hline
$\mathbb{R}$ & $e^{-x}$ & $e^{-x/2}$ & $2\ln\Big(\frac{1}{E[e^{-\bX/2}|\cG]}\Big)$\\\hline
$(0,\infty)$ & $x\ln x$ & $\sqrt{x}$ & $\Big(E[\sqrt{\bX}|\cG]\Big)^2$ \\\hline
$(0,\infty)$ & $-\ln x$ &  $\ln(x)$ & $\exp\Big(E[\ln(\bX)|\cG]\Big)$\\ \hline 
\end{tabular}
\caption{Expected conditional values in $d_\phi$ metric}
\label{tab2}
\end{table}

The only other information that we have about $\cM$ in this context is that it is a convex set in $\mathbb{R}^d.$ But we do not know if it is closed with respect to any group operation. IN this regard, see Section 5.1. Thus the only properties of the conditional expectations that we can verify at this points are those that depend only on its definition, and on the corresponding property for $h(\bX)$ with respect to $\bbP.$
\begin{theorem}\label{dcondexp1}
With the notations introduced in the previous result, and assuming that all variables mentioned are $D$-integrable we have\\
{\bf 1)} Let $\cF_0=\{\emptyset,\cF\}$ be the trivial $\sigma$-algebra, then $E_d[\bX|\cF_0]=E_d[\bX].$\\
{\bf 2)} Let $\cG_1\subset\cG_2$ be two sub-$\sigma$-algebras of $\cF,$ then $E_d[E_d[\bX|\cG_2]|\cG_1]=E_d[\bX|\cG_1].$
{\bf 3)} If $\bX\in\cG,$ then $E_d[\bX|\cG]=\bX .$\\
As both $h$ and $H$ are defined component wise, and are increasing, we can also verify  the monotonicity properties of the conditional expectations. \\
{\bf 4)} Let $\bX,\bY\in\cM$ with $\bX\leq\bY,$ then $E_d[\bX|\cG] \leq E_d[\bY|\cG].$\\
We do not necessarily have a $\mathbf{0}$ vector in $\cM,$ but a monotone convergence property may be stated as\\
 {\bf 5)} Let $\{\bX_n:n\geq 1\}$ be a sequence in $\cM$ increasing to $\bX\in\cM,$ and suppose that there exist $\cF$-measurable $\bY_1 \leq\bX_n\leq\bY_2$ and $E[h(Y_i)]\in\cL_1.$ Then $E_d[\bX_n|\cG]\uparrow E[\bX|\cG].$   
\end{theorem}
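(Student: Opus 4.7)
The uniform strategy is to exploit the defining identity $E_d[\bX|\cG] = H(E[h(\bX)|\cG])$ together with the intertwining corollary $h(E_d[\bX|\cG]) = E[h(\bX)|\cG]$, using the fact that $h$ acts componentwise and, since $h'(x)=\sqrt{\phi''(x)}>0$, is a strictly increasing $C^1$ bijection of $\cJ$ onto its image with continuous, strictly increasing inverse $H$. Each of the five assertions then reduces to a classical property of the ordinary conditional expectation applied to the $\mathbb{R}^K$-valued random variable $h(\bX)$, coordinatewise.

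Items (1)--(3) are essentially one-line reductions. For (1), $E[h(\bX)|\cF_0]=E[h(\bX)]$, so applying $H$ reproduces $E_d[\bX]$ by definition. For (2), I would first invoke the intertwining corollary on $E_d[\bX|\cG_2]$ to obtain $h(E_d[\bX|\cG_2])=E[h(\bX)|\cG_2]$, then use the classical tower property $E[E[h(\bX)|\cG_2]|\cG_1]=E[h(\bX)|\cG_1]$, and apply $H$ to both sides. For (3), if $\bX$ is $\cG$-measurable then so is $h(\bX)$ by continuity of $h$, hence $E[h(\bX)|\cG]=h(\bX)$ and $H\circ h=\mathrm{id}$ closes the loop.

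For monotonicity (4), the key observation is that the componentwise partial order on $\cM$ is both preserved and reflected by $h$, since each coordinate map $x\mapsto h(x)$ is strictly increasing on $\cJ$. Thus $\bX\leq\bY$ componentwise gives $h(\bX)\leq h(\bY)$ componentwise a.s., the scalar conditional expectation preserves the inequality in each coordinate, and applying the componentwise increasing $H$ transfers the ordering back to $\cM$. For monotone convergence (5), the componentwise continuity and monotonicity of $h$ convert $\bX_n\uparrow\bX$ into $h(\bX_n)\uparrow h(\bX)$, and the sandwich $h(\bY_1)\leq h(\bX_n)\leq h(\bY_2)$ with $h(\bY_i)\in\cL_1$ supplies a dominating pair; the conditional monotone (or dominated) convergence theorem then yields $E[h(\bX_n)|\cG]\uparrow E[h(\bX)|\cG]$ a.s., and continuity of $H$ finishes the argument.

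The only delicate point I anticipate is in (5): one must verify that the limit $\bX$ still takes values in $\cM$ so that $H$ can legitimately be applied to $E[h(\bX)|\cG]$, and interpret the integrability hypothesis as $h(\bY_i)\in\cL_1$ componentwise (the statement appears to have a typographical slip). Once these minor points are clarified, the separable structure $\Phi(\bx)=\sum_i\phi(x_i)$ decouples every step into independent scalar statements, and nothing beyond the classical scalar theorems is required.
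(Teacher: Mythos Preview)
Your proposal is correct and follows precisely the approach the paper indicates: the paper does not give a detailed proof but simply remarks, just before the statement, that ``the only properties of the conditional expectations that we can verify at this point are those that depend only on its definition, and on the corresponding property for $h(\bX)$ with respect to $\bbP$,'' which is exactly the reduction you carry out. Your write-up supplies the routine details (tower property, measurability, monotonicity, and conditional monotone/dominated convergence for $h(\bX)$) that the paper leaves implicit, and your cautionary remarks about item~(5) are apt but do not represent a divergence in method.
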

\subsection{A simple application}
Let us consider the following two strictly positive random variables (that is $K=1$ and $\cM=(0,\infty)$):
$$S(2) = S(0)e^{X+Y}\;\;\mbox{and}\;\;S(1)=S(0)e^{X}$$
where $X\sim N(\mu_1,\sigma_2^2)$ and $Y\sim N(\mu_2,\sigma^2_2)$ are two Gaussian, $\rho$-correlated random variables, with $-1<\rho<1.$ It is a textbook exercise to verify that 
$$Y_{|X} \sim N(\mu_2+\rho\frac{\sigma_2}{\sigma_1}(X-\mu_1),\sigma_2^2(1-\rho^2)).$$
If we consider the logarithmic distance on $(0,\infty),$ an application of the results in the previous section, taking into account that $S(1)$ and $X$ generate the same $\sigma$-algebra (call it $\cG$) we have that
$$E_d[S(2)|\cG] = e^{E[\ln(S(2))|\cG]} = S(1)e^{E[Y_{|X}]} = S(1)e^{m}$$
\noindent where we put $m=\mu_2+\rho\frac{\sigma_2}{\sigma_1}(X-\mu_1).$ For comparison note that the predictor in the Euclidean distance is given by
$$E[S(2)|\cG] = S(1)E[e^{Y_{|X}}] = S(1)e^{m+(1-\rho^2)\sigma_2^2/2}$$
According to Theorem \ref{prederror}, the previous one is better than the last  because its variance is smaller. 

A possible interpretation of this example goes as follows. We might of $S(0), S(1)$ and $S(2)$ as the price of an asset today, tomorrow and the day after tomorrow. $X$ and $Y$ might be thought of as the daily logarithmic return. We want to have a predictor of the price of the asset $2$ days from now, given that we observe the price tomorrow. Then $E[S(2)|S(1)]$ gives us the standard estimator, whereas $E_d[S(2)|S(1)]$ gives us the estimator in logarithmic distance.

\section{Sample estimation in the Riemannian metric derived from a Bregman divergence}\label{estimation2}  
In this section we address the issue of sample estimation of the expected values in the $d_\phi$ metric. That is, how to estimate \
\begin{equation}\label{dmean}
E_d[\bX] = H\Big(E[h(\bX)]\big)
\end{equation}
\noindent when all that we have is a sample $\{\bx_1,...,\bx_n\}$ of $\bX.$ The sample estimator is defined to be the point $\bS_n(\{\bx_k\}) \in \cM$ that minimizes the aggregate distance (``cost'' function) 
$$\sum_{k=1}^n d(\bx_n,\bv)^2 = \sum_{k=1}^n \Big(h(\bx_k) - h(\bv)\Big)^2$$
\noindent when $\bv$ ranges over $\cM.$ Clearly, for the geodesic distance computed in (\ref{geodes2}) the minimizer is easy to compute. Again, as $h$ and $H$ are bijections, we have
\begin{equation}\label{est2}
\bS_n(\{\bx_k\}) = H\Big(\frac{1}{n}\sum_{k=1}^n h(\bx_k)\Big).
\end{equation}
Recall that this identity is to be understood componentwise, that is both sides are vectors in $\cM.$\\
Certainly, $d_\phi$-mean (of the set $\{\bx_1,...,\bx_n\}$ is a good name for $\bS_n.$ Given the special form (\ref{dmean}) for $E_d[\bX],$ it is clear that (\ref{est2}) defines an unbiased estimator of the $d_\phi$-mean. At this point we mention that we leave as an exercise for the reader, to  use the semi-parallelogram law to verify the uniqueness of the minimizer of the distance to a set of points given by (\ref{est2}).

But the worth of (\ref{est2}) is for the proof of the law of large numbers. But first, we need to note that the error in estimating $\bX$ by its expected value, that is, the variance of $\bX$ is
\begin{equation}\label{dvar}
\sigma_d^2(\bX) = E[\big(h(\bX) - h(E_d[\bX])\big)^2]
\end{equation}
In this case, as with the standard proof of the weak law of large numbers we have
\begin{theorem}\label{lln}
Suppose that $\{\bX_k : k\geq 1\}$ is an i.i.d $\cM$-valued random variables, that have finite $\sigma^2_d$ variance. Then $\bS_n(\{\bx_k\})\to E_d[\bX]$ in probability.
\end{theorem}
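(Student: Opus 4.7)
The plan is to reduce the statement to the classical weak law of large numbers in Euclidean space, using the fact that $h:\cM\to h(\cM)\subset\mathbb{R}^K$ is a continuous bijection whose inverse $H$ is also continuous (indeed, $h'(x)=(\phi''(x))^{1/2}>0$, so $h$ is a $C^1$ diffeomorphism onto its image). Because $h$ acts componentwise and is an isometry between $(\cM,d_\phi)$ and $(h(\cM),\|\cdot\|)$, continuous-mapping-type arguments transfer convergence freely between the two sides.

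First, apply $h$ to the identity \eqref{est2} to observe that
\begin{equation*}
h\bigl(\bS_n(\{\bX_k\})\bigr) \;=\; \frac{1}{n}\sum_{k=1}^n h(\bX_k),
\end{equation*}
which is the ordinary sample mean of the i.i.d.\ $\mathbb{R}^K$-valued random vectors $h(\bX_k)$. By the intertwining corollary (applied to the trivial $\sigma$-algebra $\cF_0$) we also have $h(E_d[\bX])=E[h(\bX)]$. The finite-variance hypothesis $\sigma_d^2(\bX)<\infty$ is, by \eqref{dvar}, exactly the statement that each component of $h(\bX)$ lies in $\cL_2$, so the classical moment assumptions for the weak law are met.

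Next, apply Chebyshev's inequality componentwise to the i.i.d.\ sequence $\{h(\bX_k)\}$ in $\mathbb{R}^K$: for every $\varepsilon>0$,
\begin{equation*}
\bbP\Bigl(\bigl\|\tfrac{1}{n}\textstyle\sum_{k=1}^n h(\bX_k)-E[h(\bX)]\bigr\|>\varepsilon\Bigr)\;\le\;\frac{\sigma_d^2(\bX)}{n\varepsilon^2}\;\longrightarrow\;0,
\end{equation*}
so $h(\bS_n)\to E[h(\bX)]=h(E_d[\bX])$ in probability. Since $H$ is continuous on $h(\cM)$ and acts componentwise, the continuous mapping theorem gives $\bS_n=H(h(\bS_n))\to H(h(E_d[\bX]))=E_d[\bX]$ in probability with respect to the Euclidean topology on $\cM$. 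Finally, because $h$ is an isometry between $(\cM,d_\phi)$ and $(h(\cM),\|\cdot\|)$, convergence in probability in the Euclidean sense coincides with convergence in probability in the $d_\phi$-distance, which yields the stated conclusion.

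There is no real obstacle here; the content of the theorem is entirely carried by the isometry $h$, which transforms the problem into the textbook weak law of large numbers in $\mathbb{R}^K$. The only point that deserves a brief explicit verification is that convergence in probability survives the application of $H$, and this is the standard continuous mapping argument. (If one wanted a stronger conclusion, the same scheme together with Kolmogorov's theorem would even deliver an almost-sure law, but the statement as given requires only Chebyshev.)
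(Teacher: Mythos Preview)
Your proposal is correct and matches the paper's approach: the paper does not write out a proof but simply remarks ``as with the standard proof of the weak law of large numbers,'' which is precisely the reduction via the isometry $h$ and Chebyshev's inequality that you carry out. If anything, your continuous-mapping step through $H$ is slightly redundant, since the isometry $d_\phi(\bS_n,E_d[\bX])=\|h(\bS_n)-h(E_d[\bX])\|$ already gives convergence in the $d_\phi$-distance directly from the Euclidean convergence of $h(\bS_n)$.
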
 
Proceeding as in the case of Euclidean geometry, we have

\begin{theorem}\label{estvar}
With the same notations and assumptions as in the previous result, define the estimator of the variance by
$$\hat{\sigma}_d^2 = \frac{1}{(n-1)}\sum_{k=1}^n \big(h(\bX_k) - h(\bS_n(\{\bx_k\}))\big)^2.$$
Then $\hat{\sigma}_d^2$ is an unbiased estimator of $\sigma_d^2(\bX).$
\end{theorem}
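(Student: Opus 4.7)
The plan is to transport the entire problem through the isometry $h$, where it collapses to the classical fact that the Euclidean sample variance with denominator $n-1$ is unbiased.

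First I would set $\bY_k = h(\bX_k)$, noting that by Lemma \ref{simple2} and the i.i.d.\ hypothesis, the $\bY_k$ are i.i.d.\ $\mathbb{R}^K$-valued random vectors with finite second moment. Write $\bmu = E[\bY_1] = E[h(\bX)] = h(E_d[\bX])$ for their common Euclidean mean (the last identity being the definition of $E_d[\bX]$ in \eqref{dmean}), and $\bar{\bY}_n = \tfrac{1}{n}\sum_{k=1}^n \bY_k$, which by \eqref{est2} equals $h(\bS_n(\{\bx_k\}))$. Under this change of variables, the variance \eqref{dvar} becomes $\sigma_d^2(\bX) = E[\|\bY_1 - \bmu\|^2]$ and the estimator becomes
$$
\hat{\sigma}_d^2 = \frac{1}{n-1}\sum_{k=1}^n \|\bY_k - \bar{\bY}_n\|^2.
$$

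Next I would carry out the usual algebraic bias decomposition. Writing $\bY_k - \bar{\bY}_n = (\bY_k - \bmu) - (\bar{\bY}_n - \bmu)$, expanding the squared norm, and using $\sum_k (\bY_k - \bmu) = n(\bar{\bY}_n - \bmu)$ to handle the cross term, one obtains
$$
\sum_{k=1}^n \|\bY_k - \bar{\bY}_n\|^2 \;=\; \sum_{k=1}^n \|\bY_k - \bmu\|^2 \;-\; n\,\|\bar{\bY}_n - \bmu\|^2.
$$
Taking expectations, the first term on the right contributes $n\sigma_d^2(\bX)$. For the second, independence of the $\bY_k$ gives $E[\|\bar{\bY}_n - \bmu\|^2] = \tfrac{1}{n}\sigma_d^2(\bX)$, a standard $\mathbb{R}^K$ computation that just uses that cross-covariances vanish. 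Hence
$$
E\Big[\sum_{k=1}^n \|\bY_k - \bar{\bY}_n\|^2\Big] = n\sigma_d^2(\bX) - \sigma_d^2(\bX) = (n-1)\sigma_d^2(\bX),
$$
and dividing by $n-1$ yields $E[\hat{\sigma}_d^2] = \sigma_d^2(\bX)$, as required.

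There is essentially no serious obstacle: because $h$ is a bijective isometry onto its image in $\mathbb{R}^K$ (in the sense of \eqref{geodes2}) and $E_d$ is defined precisely so as to commute with $h$, the problem is literally the Euclidean one after one line of rewriting. The only point deserving care is to state the variance \eqref{dvar} as a squared Euclidean norm in $\mathbb{R}^K$ (so that the cross terms between coordinates are handled correctly) and to invoke the independence of the $\bX_k$ — hence of the $\bY_k = h(\bX_k)$ — at the one step where $\mathrm{Var}(\bar{\bY}_n) = \sigma_d^2(\bX)/n$ is used.
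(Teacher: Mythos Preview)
Your proposal is correct and matches the paper's approach exactly: the paper does not spell out a proof but simply writes ``Proceeding as in the case of Euclidean geometry, we have'' before stating the theorem, which is precisely your strategy of pushing everything through $h$ and invoking the classical unbiasedness of the sample variance. You have merely supplied the routine details the paper omits.
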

Comment: Observe that $\hat{\sigma}_d^2$ is a positive, real random variable. So, its expected value is the standard expected value.

\section{Arithmetic properties of the expectation operation}\label{group}
When there is a commutative group operation on $\cM$ that leaves the metric invariant, then the best predictors have additional properties. The two standard examples that we have in mind are $\cM=\mathbb{R}^d$ and the group operation being the standard addition of vectors, or $\cM=(0,\infty)^d$ and the group operation being the component wise multiplication. For definiteness, let us denote the group operation by $\bx_1\circ\bx_2$ and the inverse of $\bx$ with respect to that operation by $\bx^{-1}.$ Let $\be$ denote the identity for that operation. That the distance invariant (or the group operation is an isometry), that is
\begin{equation}\label{ginv}
\mbox{Suppose that for any}\;\;\bx,\,\by,\,\bv \in \cM\;\;\mbox{we have}\;\;\;d(\bx\circ\bv,\by\circ\bv)=d(\bx,\by).
\end{equation}
Some simple consequences of this fact are the following. To begin with, we can define a norm derived from the distance by $|\bx|_d=d(\be,\bx).$
We leave it up to the reader to verify that in this notation the triangle inequality for $d$ becomes $|\bx\circ\by^{-1}|_d=d(\bx,\by) \leq |\bx|_d + |\by|_d,$ and that this implies that implies that $|\bx|_d = |\bx^{-1}|_d.$

Let us now examine two examples of the situation described above. For the first example in Table \ref{tab1}, in which the conditional expectation in divergence and in the distance derived from it coincide, we know that the conditional expectation is linear. In the last example in Table \ref{tab1}, the analogue of multiplication by a scalar is the (componentwise) exponentiation. In this case, we saw that the conditional expectation of a strictly positive random variable $\bX$  with respect to a $\sigma$-algebra $\cG$ is
$$E_d[\bX|\cG] = e^{E[\ln(\bX)|\cG]},$$
It is easy to verify, and it is proved in \cite{GH}, that
\begin{theorem}\label{lin}
Let $\bX_1$ and $\bX_2$ be two $(0,\infty)^d$-valued which are $\bbP$-integrable in the logarithmic metric. Let $a_1$ and $a_2$ be two real numbers, then
$$E_d[\bX_1^{a_1}\bX_2^{a_2}|\cG] = \Big(E_d[\bX_1|\cG]\Big)^{a_1}\Big(E_d[\bX_2|\cG]\Big)^{a_2}.$$
\end{theorem}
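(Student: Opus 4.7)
The plan is to reduce the statement to the linearity of the ordinary $\bbP$-conditional expectation through the intertwining identity. For the divergence $\phi(x)=-\ln x$ we have $h(x)=\ln x$ and $H(x)=e^{x}$, acting componentwise on vectors in $(0,\infty)^d$. Theorem \ref{dcondexp} then gives the explicit formula $E_d[\bX\mid\cG]=\exp\bigl(E[\ln\bX\mid\cG]\bigr)$. So the natural move is to apply $h$ to the left-hand side of the claimed identity, exploit linearity under $h$, and then invert with $H$.

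Concretely, first I would observe that, componentwise, $\ln(\bX_1^{a_1}\bX_2^{a_2}) = a_1\ln\bX_1 + a_2\ln\bX_2$, since on $(0,\infty)^d$ the group operation $\circ$ is componentwise multiplication and exponentiation by a real scalar is componentwise. Next, I would apply the standard (Euclidean) conditional expectation to both sides, using its linearity, to obtain
\begin{equation*}
E\bigl[\ln(\bX_1^{a_1}\bX_2^{a_2})\mid\cG\bigr] = a_1\,E[\ln\bX_1\mid\cG] + a_2\,E[\ln\bX_2\mid\cG].
\end{equation*}
Finally, exponentiating componentwise and using the functional equation $e^{u+v}=e^{u}e^{v}$ together with $e^{au}=(e^{u})^{a}$ yields
\begin{equation*}
E_d[\bX_1^{a_1}\bX_2^{a_2}\mid\cG] = e^{a_1 E[\ln\bX_1\mid\cG]}\,e^{a_2 E[\ln\bX_2\mid\cG]} = \bigl(E_d[\bX_1\mid\cG]\bigr)^{a_1}\bigl(E_d[\bX_2\mid\cG]\bigr)^{a_2},
\end{equation*}
which is exactly the claim.

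The main obstacle is not the algebra, which is essentially bookkeeping, but the integrability: one needs $\ln\bX_1,\ln\bX_2\in\cL_1$ so that the linear combination $a_1\ln\bX_1+a_2\ln\bX_2$ is again in $\cL_1$, thereby justifying the use of linearity of $E[\,\cdot\mid\cG]$ and guaranteeing that $\bX_1^{a_1}\bX_2^{a_2}$ is itself $D_\phi$-integrable in the sense of Definition \ref{norm}. This is exactly what the hypothesis ``$\bbP$-integrable in the logarithmic metric'' together with Lemma \ref{simple2} delivers: integrability in the $d_\phi$-sense is equivalent to $\ln\bX_i\in\cL_1$, and finite linear combinations of $\cL_1$ variables remain in $\cL_1$. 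Once this is verified, the argument collapses to the three lines above, and the intertwining corollary following Theorem \ref{dcondexp} can be invoked to package the calculation more compactly if desired.
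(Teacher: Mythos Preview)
Your argument is correct and is exactly the ``easy to verify'' computation the paper alludes to: apply $h=\ln$, use linearity of the ordinary conditional expectation, and invert with $H=\exp$, with Lemma \ref{simple2} handling the integrability. The paper itself does not spell out a proof here but defers to \cite{GH}; your three-line derivation is precisely what that reference would contain.
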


\section{Concluding comments}
\subsection{General comments about prediction}
A predictive procedure involves several aspects: To begin with, we have to specify the nature of the set in which the random variables of interest take values and the class of predictors that we are interested in. Next comes the criterion, cost function or error function used to quantify the ``betterness'' of a predictor, and finally, we need some way to decide on the uniqueness of the best predictor.

We mentioned at the outset that using the notion of divergence function, there exists a notion of best predictor for random variables taking values in convex subsets $\cM$ of some $\mathbb{R}^d,$ which, somewhat surprisingly, coincides with the standard least squares best predictor.  The fact that in the Riemannian metric on $\cM$ derived from a divergence function a notion of best predictor exists, suggests the possibility of extending the notion of best predictor to Tits-Bruhat spaces. These are complete metric spaces, whose metric satisfies the semi-parallelogram law stated in Lemma 2.1. Using the completeness of the space the notion of ``mean'' of a finite set as the point that minimizes the sum of the squares of the distances to the points of the set, or that of best predictor are easy to establish. And using the semi-parallelogram law, the uniqueness of the best predictor can be established. 

The best predictors can be seen to have some of the properties of conditional expectation, except those that depend on the underlying vector space structure of $\cM,$ like Jensen's inequality and the ``linearity'' of the best predictor.

\subsection{Other remarks}
In some cases it is interesting to consider the Legendre-Fenchel duals of the convex function generating the divergence, see \cite{BDGMM}, \cite{BNN} or \cite{N} for example. The Bregman divergences induce a dually flat space, and conversely, we can associate a canonical Bregman divergence to a dually flat space.\footnote{Thanks to Frank Nielsen for the remark} The derived metric in this case is the (algebraic) inverse of the original metric, and it generates the same distance, see \cite{AC} for this. Therefore the same comparison results hold true in this case as well. 

As remarked at the end of Section 2.1, to compare the derived metrics to the standard Euclidean metric, and therefore, to compare the prediction errors (or the $d$-variance to the standard variance of a $\cM$ random variable does not seem to be an easy task. This is a pending issue to be settled.
 
We saw that the set $\cM$ on which the random variables of interest may be equipped with more than one distance.
The results presented above open up the door to the following conceptual (or methodological) question: Which is the correct distance to be used to make predictions about $\cM$-valued random variables? 

Other pending issue corresponds to the general case in which $\Phi(\bx)$ is not of the type (\ref{convf}). In this case, by suitable localization we might reproduce the results of Section 2 locally. The problem is to paste together
the representation of the geodesics and the rest using the local representation.

We saw as well that when there is no algebraic structure upon $\cM,$ some properties of the estimators are related only to the metric properties of the space, while when there is a commutative operation on $\cM,$ the best estimators have further algebraic properties. In reference to the examples in Section 2, an interesting question is which metrics admit a commutative group operation that leaves them invariant.

\section{Appendix: Integration of the geodesic equations}
Consider (\ref{geoeq}), that is
$$\phi''(x_i)\ddot{x}_i + \frac{1}{2}\phi'''(x_i)\dot{x}_i^2 = 0$$
This is the Euler-Lagrange equation of the Lagrangian $L(x,\dot{x})=g\dot{x}^2/2,$ where we put $\phi''(x)=g(x).$ 

Notice now, that of we make the change of variables $y=h(x),$ where $h'(x)=g^{1/2}(x),$ in the new coordinates we can write the Lagrangian function as $L(y,\dot{y})=\dot{y}^2.$ In these new coordinates the geodesics are straight lines 
$$y(t) = y(0) + kt.$$
If at $t=0$ the geodesic starts at $x_0$ (or at $y_0=h(x_0)$) and at $t=1$ it is at $x_1$ (or at $y_1=h(x_1)$), we obtain $k=h(x_1)-h(x_0).$ 

{\bf Acknowledgment} I want to thank Frank Nielsen for his comments and suggestions on the first draft of this note.


\begin{thebibliography}{}
\bibitem{AC} Amari, S and Cichocki, A. (2010). {\it Information theory of divergence functions}, Bull. of the Polish Acad. Sci., {\bf 58}, 183-195.
\bibitem{AN} Amari, S. and Nagaoka, H. (2000). {\it Methods of Information Geometry}, Transl. of Mathem. Monographs, {\bf 191}, Oxford Univ. Press, New York.
\bibitem{BGW} Banerjee, A., Guo, X. and Wang, H. (2005). {\it On the optimality of conditional expectation as Bregman predictor},  IEEE Transactions on Information Theory, {\bf 51}, 2664 - 2669
\bibitem{BDGMM} Banerjee, A. Dhillon, I., Ghosh, I. Merugu, S and Modhi, D.s. (2008). {\it A Generalized Maximum Entropy Approach to Bregman Co-clustering and Matrix Approximation}The Jour. of Machine Learning Research, {\it 8}, 1919-1986. 
\bibitem{BB} Baushke, H.H. and Borweinn, J.M. (1997). {\it Legendre Functions and the Method of Random Bregman Projections}, Journal of Convex Analysis, {\bf 4},  27-67.
\bibitem{BL}Baushke, H.H and Lewis, A. (1998). {\it Dykstras algorithm with Bregman projections: A convergence proof}, Optimization, {\bf 48}, 409-427
\bibitem{BBC} Baushke, H.H., Borwein, J.M and Combettes, P.L. (2003). {\it Bregman monotone optimization algorithms}, SIAM J. Control Optim., {\bf 42}, 596-636.
\bibitem{BC} Baushke, H.H and Combettes, P.L. (2003).  {\it Construction of best Bregman approximation in reflective Banach spaces}, Proc. Amer. Math. Soc., {\bf 131}, 3757-3766
\bibitem{BNN} Boissonnat, J-D., Nielsen, F and Nock, F. (2010). {\it Bregman Voronoi Diagrams: Properties, algorithms and applications}, Discrete Comput. Geom., {\bf 44}, 281-307.
\bibitem{Br} Bregman, L. (1967). {\it The relaxation method of finding common points of convex sets and its application to the solution of problems in convex programming}, Comp. Math. Phys., USSR, {\bf 7}, 2100-217.
\bibitem{BR}Butnariu, D. and Resmerita, E. (2005). {\it Bregman distances, totally convex functions, and a method for solving operator equations}, Abstract and Applied Analysis, {\bf 2006}, Article ID 84919, 1-39.
\bibitem{CU} Calin, O. and Urdiste, C. {\it Geometric Modeling in Probability and Statistics}, Springer Internl. Pub., Switzerland, (2010).
\bibitem{CR} Censor, Y and Reich, S. (1998). {\it The Dykstra algorithm with Bregman projections}. Comm. Applied Analysis, {\bf 2}, 407-419.
\bibitem{CZ} Censor, Y. and Zaknoon, M. (2018). {\it Algorithms and convergence results of projection methods for inconsistent feasibility problems: A review}, arXiv:1802.07529v3 [math.OC].
\bibitem{Cs} Csisz\'ar, I. (2008). {\it Axiomatic characterization of information measures}, Entropy, {\bf 10}, 261-273.
\bibitem{F} Fisher, A. (20XX). {\it Quantization and clustering with Bregman divergences}, Journal of Multivariate Analysis, {\bf 101}, 2207-2221.
\bibitem{GH} Gzyl, H. (2017) {\it Prediction in logarithmic distance}. Available at \\http://arxiv.org/abs/1703.08696.
\bibitem{L} Lang, S. {Math talks for undergraduates}, Springer, New York, (1999).
\bibitem{LSY}Li, C.,  Song, W. and Yao, J-C. (2010). {\it The Bregman distance, approximate compactness and convexity of Chebyshev sets in Banach spaces},Journal of Approximation Theory {\bf 162}, 1128-1149.
\bibitem{LL} Lawson, J.D. and Lim, Y. (2001) {\it The Geometric mean, matrices, metrics and more}, Amer. Math.,Monthly, {\bf 108}, 797-812.
\bibitem{Moh} Moahker, M. (2005) {\it A differential geometric approach to the geometric mean of symmetric positive definite matrices}, SIAM. J. Matrix Anal. \& Appl., {\bf 26}, 735-747
\bibitem{N} Nielsen, F. (2018). {\it An elementary introduction to information theory}. Available at https://arxiv.org/abs/1808.08271
\bibitem{P} Pollard, D. (2002). {\it A user's guide to measure theoretic probability}, Cambridge Univ. Press., Cambridge.
\bibitem{Sch} Schwartzmazn, A. (2015) {\it Lognormal distribution and geometric averages of positive definite matrices}, Int. Stat. Rev., {\bf 84}, 456-486.
\end{thebibliography}
\end{document}